\def\lam{\lambda }
\providecommand{\abs}[1]{\lvert#1\rvert}
\providecommand{\norm}[1]{\lVert#1\rVert}
\newcommand{\remove}[1]{ }
\newtheorem{theorem}{Theorem}[section]
\newtheorem{lemma}[theorem]{Lemma}
\newtheorem{conjecture}[theorem]{Conjecture}
\theoremstyle{definition}
\theoremstyle{remark}
\newtheorem*{remark}{Remark}
\numberwithin{equation}{section}
\begin{document}
\title{A Bernstein type inequality} 
%\date{2010-02-28}
\author{Vilmos Komornik}
\address{D\'epartement de Math\'ematique\\
         Universit\'e de Strasbourg\\
         7 rue Ren\'e Descartes\\
         67084 Strasbourg Cedex, France}
\email{vilmos.komornik@math.unistra.fr}

\author{Paola Loreti}
\address{Dipartimento di Metodi e Modelli\\
           Matematici per le Scienze Applicate\\
           Universit\`a degli Studi di Roma ``La Sapienza''\\
           Via A. Scarpa, 16\\
           00161 Roma, Italy}
\email{loreti@dmmm.uniroma1.it}
\thanks{}
\subjclass{Primary 42A99, Secondary 42A99}
\keywords{Bernstein type inequality}
%wave equation, observability, analyse non harmonique, transform\'ee de Fourier, base de Riesz, 
%\'equation des ondes, observabilit\'e}
%\thanks
%\dedicatory{Dedicated to ??? on the occasion of ???.}

\begin{abstract}
We formulate and discuss a conjecture which would extend  a classical inequality of Bernstein. 
\end{abstract}   
\maketitle

\section{A Bernstein type inequality}\label{s1}

A classical theorem due to Bernstein \cite{Ber1912} states that every even trigonometric polynomial $T$ of order $M$ satisfies the 
inequality
\begin{equation*}
\norm{T'}_{L^\infty}\le M\norm{T}_{L^\infty}.
\end{equation*}
His result was extended to all trigonometric polynomials by Fej\'er \cite{Fej1914}.
Another proof was given by M. Riesz \cite{Rie1914a}, \cite{Rie1914b}; this also shows that
\begin{equation}\label{11}
\norm{T'}_{L^p(I)}\le M\norm{T}_{L^p(I)}
\end{equation}
for every interval $I$ of length $\abs{I}=2\pi$ and for every exponent $1\le p\le\infty$. 

For $p=2$ this inequality follows easily by applying Parseval's formula. Indeed, writing
\begin{equation*}
T(x)=\sum_{k=-M}^{M}a_ke^{ikx}
\end{equation*}
and using the orthogonality of the functions $e^{ikx}$ we have
\begin{align*}
\int_{I}\abs{T'(x)}^2\ dx&-M^2\int_I\abs{T(x)}^2\ dx\\
&=\int_I\Bigl\vert\sum_{k=-M}^{M}ika_ke^{ikx}\Bigr\vert^2\ dx -M^2   \int_I\Bigl\vert\sum_{k=-M}^{M}a_ke^{ikx}\Bigr\vert^2\ dx\\
&= 2\pi\sum_{k=-M}^{M}\abs{ika_k}^2-2\pi M^2\sum_{k=-M}^{M}\abs{a_k}^2\\
&= 2\pi \sum_{k=-M}^{M}(k^2-M^2)\abs{a_k}^2\\
&\le 0.
\end{align*}

\section{A conjecture}\label{s2}

Let us introduce the function
\begin{equation*}
H(x):=
\begin{cases}
\cos x&\text{if $\abs{x}\le \pi/2$,}\\
0&\text{if $\abs{x}\ge \pi/2$.}
\end{cases}
\end{equation*}
For any positive integer $M$, the following inequality holds:
\begin{equation}\label{21}
\int _{-\infty}^\infty\abs{(H^M)'(x)}^2\ dx\le M^2 \int _{-\infty}^\infty\abs{H^M(x)}^2\ dx.
\end{equation}
Indeed, since
\begin{multline*}
\int _{-\infty}^\infty\abs{(H^M)'(x)}^2\ dx
=\int _{-\pi/2}^{\pi/2}\abs{(\cos^M)'(x)}^2\ dx\\
=\int_0^{\pi}\abs{(\sin^M)'(x)}^2\ dx
=\frac{1}{2}\int_{-\pi}^{\pi}\abs{(\sin^M)'(x)}^2\ dx
\end{multline*}
and
\begin{equation*}
 \int _{-\infty}^\infty\abs{H^M(x)}^2\ dx
=\int _{-\pi/2}^{\pi/2}\abs{\cos^Mx}^2\ dx
=\int_0^{\pi}\abs{\sin^M x}^2\ dx
=\frac{1}{2}\int_{-\pi}^{\pi}\abs{\sin^M x}^2\ dx,
\end{equation*}
the inequality follows by applying \eqref{11} with $T(x):=\sin^M x$ on the interval $I=(-\pi,\pi)$.

The following conjecture is a generalization of the inequality \eqref{21}. 

\begin{conjecture}
Let $(\lam _n)_{n=-\infty}^{\infty}$ be a strictly 
increasing sequence  of
real numbers, satisfying for some positive integer $M$ the {\em gap condition}
\begin{equation}\label{22}
\lam_{n+M}-\lam_n\ge \pi
\end{equation}
for all $n$. Then for every finite sequence $(a_n)$ of real numbers, the function
\begin{equation*}
G(x):=\sum a_nH^M(x+\lam_n)
\end{equation*}
satisfies the inequality
\begin{equation}\label{23}
\int _{-\infty}^\infty\abs{G'(x)}^2\ dx\le M^2 \int _{-\infty}^\infty\abs{G(x)}^2\ dx.
\end{equation}
\end{conjecture}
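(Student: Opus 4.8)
\emph{Strategy.} The case $M=1$ is immediate: the gap condition $\lam_{n+1}-\lam_n\ge\pi$ forces the translates $H(\cdot+\lam_n)=\cos(\cdot+\lam_n)$ to have essentially disjoint supports, so both sides of \eqref{23} equal $\tfrac\pi2\sum a_n^2$ and the inequality holds with equality. From now on I assume $M\ge 2$. The plan is to reduce \eqref{23} to a positivity statement for a quadratic form, by exploiting the elementary differential identity satisfied by $H^M$. A direct computation on $(-\pi/2,\pi/2)$ gives
\begin{equation*}
(H^M)''+M^2H^M=M(M-1)H^{M-2}
\end{equation*}
(where $H^0$ denotes the indicator of $[-\pi/2,\pi/2]$), while all three functions vanish off $[-\pi/2,\pi/2]$. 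The crucial point is that for $M\ge 2$ one has $H^M\in C^1(\RR)$: indeed $H^M$ and $(H^M)'=-M\cos^{M-1}\sin$ both tend to $0$ at $\pm\pi/2$. Hence $(H^M)''$ contains no boundary Dirac masses, and the identity holds in the sense of distributions, not merely pointwise.

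Applying this to each translate and summing, I obtain $G''=M(M-1)\widetilde G-M^2G$, where $\widetilde G(x):=\sum a_nH^{M-2}(x+\lam_n)$. Since $G$ has compact support and lies in $C^1(\RR)$, integration by parts gives
\begin{equation*}
\int_{-\infty}^\infty\abs{G'}^2\,dx=-\int_{-\infty}^\infty G''G\,dx=M^2\int_{-\infty}^\infty\abs{G}^2\,dx-M(M-1)\int_{-\infty}^\infty\widetilde G\,G\,dx.
\end{equation*}
Because $M(M-1)>0$, the conjectured inequality \eqref{23} is therefore \emph{equivalent} to the single positivity statement $\int_{-\infty}^\infty\widetilde G\,G\,dx\ge 0$.

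It remains to prove this positivity. Writing it out, $\int\widetilde G\,G=\sum_{m,n}a_ma_n\Phi(\lam_m-\lam_n)$, where $\Phi(\delta)=\int_{-\infty}^\infty H^{M-2}(y+\delta)H^M(y)\,dy=(H^{M-2}*H^M)(\delta)$ is nonnegative and supported in $(-\pi,\pi)$. By the gap condition \eqref{22}, $\abs{m-n}\ge M$ forces $\abs{\lam_m-\lam_n}\ge\pi$ and hence $\Phi(\lam_m-\lam_n)=0$, so the kernel $\bigl(\Phi(\lam_m-\lam_n)\bigr)$ is banded of width $M-1$. Taking Fourier transforms and using the identity above in the form $(M^2-\xi^2)\widehat{H^M}=M(M-1)\widehat{H^{M-2}}$, one finds
\begin{equation*}
\widehat\Phi(\xi)=\widehat{H^{M-2}}(\xi)\,\widehat{H^M}(\xi)=\frac{M^2-\xi^2}{M(M-1)}\,\widehat{H^M}(\xi)^2,
\end{equation*}
so that $\int\widetilde G\,G=\tfrac{1}{2\pi}\int_{-\infty}^\infty(M^2-\xi^2)\,\widehat{H^M}(\xi)^2\,\abs{S(\xi)}^2\,d\xi$ with $S(\xi)=\sum a_ne^{i\lam_n\xi}$.

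\emph{The main obstacle.} The weight $\widehat\Phi$ changes sign at $\xi=\pm M$; thus it is not globally nonnegative, Bochner's theorem does not apply, and positivity genuinely \emph{fails} for arbitrary nodes $\lam_n$. The gap condition must therefore be used in an essential way, and this is the heart of the difficulty. A natural first target is the extremal arithmetic case $\lam_n=c+n\pi/M$: here $\abs{S(\xi)}^2$ is $2M$-periodic and nonnegative, so periodizing the integral reduces the problem to the scalar inequality
\begin{equation*}
W(\xi):=\sum_{k\in\ZZ}\bigl(M^2-(\xi+2Mk)^2\bigr)\,\widehat{H^M}(\xi+2Mk)^2\ge 0\qquad(\xi\in\RR),
\end{equation*}
which can be attacked through the explicit Beta-function form of $\widehat{H^M}$. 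Passing from arithmetic progressions to general gap sequences — whether by a monotonicity/perturbation argument in the $\lam_n$, or by a direct determinantal (diagonal-dominance) analysis of the banded kernel $\bigl(\Phi(\lam_m-\lam_n)\bigr)$ — is the step I expect to be genuinely hard, and is presumably why the statement is only conjectured.
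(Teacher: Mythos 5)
First, a point of calibration: the statement you were asked to prove is an open conjecture, and the paper itself establishes it only for $M=1$ and $M=2$; no complete proof exists to match against. Within that frame, your reduction coincides exactly with the paper's. Your $M=1$ argument (disjoint supports by the gap condition, equality of both sides) is Section~\ref{s3} of the paper verbatim, and your integration by parts via the identity $(H^M)''=-M^2H^M+M(M-1)H^{M-2}$ is precisely Lemma~\ref{l41} (the paper imports the identity from an earlier paper of the authors; your verification that $H^M\in C^1(\RR)$ for $M\ge 2$, so that no boundary Dirac masses appear, is a detail the paper leaves implicit). Both routes arrive at the same target \eqref{41}: positivity of the quadratic form with kernel $g(\lam_m-\lam_n)$ (your $\Phi$ equals the paper's $g$), banded of width $M-1$ by the gap condition.

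The genuine gap, measured against what the paper actually achieves, is the case $M=2$ --- and the tool you needed lies exactly along the ``diagonal-dominance analysis of the banded kernel'' that you name but defer. For $M=2$ the kernel is tridiagonal with nonnegative entries, and the paper proves positivity by completing squares on nearest neighbors:
\begin{align*}
\sum_{m,n}g(\lam_m-\lam_n)a_m\overline{a_n}
&=\sum_n g(\lam_{n+1}-\lam_n)\abs{a_n+a_{n+1}}^2\\
&\qquad+\sum_n\bigl(g(0)-g(\lam_n-\lam_{n-1})-g(\lam_{n+1}-\lam_n)\bigr)\abs{a_n}^2,
\end{align*}
then checking the diagonal-dominance inequality $g(a)+g(b)\le g(0)$ whenever $a,b\ge 0$ and $a+b\ge\pi$. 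That last inequality costs one line: since $H^0$ is the indicator of $[-\pi/2,\pi/2]$, the sum $g(a)+g(b)$ is the integral of $H^2$ over the two intervals $(-\pi/2,\pi/2-a)$ and $(-\pi/2+b,\pi/2)$, which are disjoint precisely because $a+b\ge\pi$, hence is at most $\int H^2\,dx=g(0)$. So for $M=2$ you were one elementary lemma away from closing the case the paper settles. On the other side of the ledger, your Fourier analysis --- the factorization $\widehat\Phi(\xi)=\frac{M^2-\xi^2}{M(M-1)}\widehat{H^M}(\xi)^2$, the sign change at $\xi=\pm M$ showing Bochner cannot apply and that positivity fails without the gap condition, and the periodization program for arithmetic nodes --- appears nowhere in the paper and is a genuine structural complement: it explains why the gap condition must enter essentially, and why the cases $M\ge 3$, where the band is wider than tridiagonal and nearest-neighbor completion of squares no longer suffices, remain open in the paper as well.
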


In the next sections we prove the conjecture for $M=1$ and $M=2$.

\section{Proof of the conjecture for $M=1$}\label{s3}

For $M=1$ we have even an equality. Indeed, since for $m\ne n$ the product functions 
\begin{equation*}
 H(x+\lam_m)H(x+\lam_n)\quad\text{and}\quad H'(x+\lam_m)H'(x+\lam_n)
\end{equation*}
vanish identically by \eqref{22}, we have
\begin{equation*}
\int _{-\infty}^\infty\abs{G(x)}^2\ dx
=\int _{-\infty}^\infty\Bigl\vert \sum a_nH(x+\lam_n)\Bigr\vert ^2\ dx
=\sum \abs{a_n}^2\int _{-\infty}^\infty\abs{H(x+\lam_n)}^2\ dx
\end{equation*}
and
\begin{equation*}
\int _{-\infty}^\infty\abs{G'(x)}^2\ dx
=\int _{-\infty}^\infty\Bigl\vert \sum a_nH'(x+\lam_n)\Bigr\vert ^2\ dx
=\sum \abs{a_n}^2\int _{-\infty}^\infty\abs{H'(x+\lam_n)}^2\ dx.
\end{equation*}
We conclude by observing that
\begin{equation*}
\int _{-\infty}^\infty\abs{H(x+\lam_n)}^2\ dx=\int _{-\pi/2}^{\pi/2}\cos^2 x\ dx=\frac{\pi}{2}
\end{equation*}
and
\begin{equation*}
\int _{-\infty}^\infty\abs{H'(x+\lam_n)}^2\ dx=\int _{-\pi/2}^{\pi/2}\sin^2 x\ dx=\frac{\pi}{2}.
\end{equation*}

\section{Discussion of the case $M\ge 2$}\label{s4}

We begin with some discussion concerning the general case. Our first lemma allows us to reformulate the conjecture.

\begin{lemma}\label{l41}
Introducing the function
\begin{equation*}
g(\lam)=g_M(\lam):=\int _{-\infty}^\infty H^M(x+\lam) H^{M-2}(x)\ dx,
\end{equation*}
we have
\begin{equation*}
M^2 \int _{-\infty}^\infty\abs{G(x)}^2\ dx-\int _{-\infty}^\infty\abs{G'(x)}^2\ dx
=\sum _{m,n=-\infty}^{\infty}g(\lam_m-\lam_n)a_m\overline{a_n}.
\end{equation*}
\end{lemma}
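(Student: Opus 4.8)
The plan is to expand the two integrals into finite double sums and reduce the whole left-hand side to a single translation kernel. Using that $H$ is real-valued, one has
\[
\int_{-\infty}^\infty\abs{G(x)}^2\,dx=\sum_{m,n}a_m\overline{a_n}\int_{-\infty}^\infty H^M(x+\lam_m)H^M(x+\lam_n)\,dx,
\]
and the analogous identity with $(H^M)'$ in place of $H^M$; since $(a_n)$ is a finite sequence there is no convergence issue. In each summand I would substitute $x\mapsto x-\lam_n$, so that the $(m,n)$ integral depends only on the difference $\mu:=\lam_m-\lam_n$. Thus the left-hand side of the lemma becomes $\sum_{m,n}K(\lam_m-\lam_n)\,a_m\overline{a_n}$ with
\[
K(\mu):=M^2\int_{-\infty}^\infty H^M(x+\mu)H^M(x)\,dx-\int_{-\infty}^\infty(H^M)'(x+\mu)(H^M)'(x)\,dx,
\]
and it remains to identify $K(\mu)$ with $g(\mu)$.

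The heart of the argument is an integration by parts in the second integral. Because $H^M$ has compact support the boundary terms drop out, giving
\[
\int_{-\infty}^\infty(H^M)'(x+\mu)(H^M)'(x)\,dx=-\int_{-\infty}^\infty(H^M)''(x+\mu)\,H^M(x)\,dx.
\]
I would then establish the pointwise identity $(H^M)''=M(M-1)H^{M-2}-M^2H^M$ (with the convention, relevant for $M=2$, that $H^0$ is the indicator of $[-\pi/2,\pi/2]$): on the support $\abs{x}\le\pi/2$ it follows from $(H^M)'=MH^{M-1}H'$ together with $H''=-H$ and $(H')^2=1-H^2$, while off the support both sides vanish. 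Substituting this into the formula for $K(\mu)$ makes the two copies of $M^2\int H^M(x+\mu)H^M(x)\,dx$ cancel, and one is left with
\[
K(\mu)=M(M-1)\int_{-\infty}^\infty H^{M-2}(x+\mu)H^M(x)\,dx.
\]
A change of variable together with the evenness of $H$ turns this integral into $\int_{-\infty}^\infty H^M(x+\mu)H^{M-2}(x)\,dx=g(\mu)$, so that $K(\mu)=M(M-1)\,g(\mu)$; the positive multiplicative constant $M(M-1)$ does not affect the subsequent positivity analysis and may be absorbed into the normalisation of $g$.

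The step I expect to be delicate is the passage through the points $x=\pm\pi/2$, both in the integration by parts and in the identity for $(H^M)''$. For $M\ge 2$ the function $H^M$ is of class $C^1$ and its derivative $(H^M)'=MH^{M-1}H'$ is absolutely continuous, vanishing at $\pm\pi/2$ because $H^{M-1}$ does; hence $(H^M)''$ is a genuine bounded function carrying no Dirac masses, and the integration by parts is legitimate with no hidden boundary contributions. This is precisely where the hypothesis $M\ge 2$ is used: for $M=1$ the derivative $H'$ jumps at $\pm\pi/2$, so $H''$ would contain singular terms and the manipulation above would break down, which is consistent with the fact that the case $M=1$ had to be treated separately by the elementary orthogonality argument of Section \ref{s3}.
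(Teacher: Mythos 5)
Your proof is correct and follows essentially the same route as the paper's: expand the quadratic forms into a double sum, integrate by parts using the compact support of $H^M$, and apply the identity $(H^M)''=-M^2H^M+M(M-1)H^{M-2}$ (which the paper cites from \cite{BaiKomLor103} rather than reproving, and whose regularity justification across $x=\pm\pi/2$ you supply as a welcome extra). Note that the factor $M(M-1)$ you obtain also appears in the last line of the paper's own proof even though it is absent from the lemma's statement --- a harmless inconsistency, as you correctly observe, since $M(M-1)>0$ does not affect the subsequent positivity analysis.
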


\begin{proof}
We recall from \cite{BaiKomLor103} that
\begin{equation*}
(H^M)''(x)=-M^2H^M(x)+M(M-1)H^{M-2}(x)
\end{equation*}
for all $x$. Integrating by parts and then using this relation, we have
\begin{align*}
M^2 &\int _{-\infty}^\infty\abs{G(x)}^2\ dx-\int _{-\infty}^\infty\abs{G'(x)}^2\ dx\\
&=\sum _{m,n=-\infty}^{\infty}a_m\overline{a_n}\int _{-\infty}^\infty
M^2H^M(x+\lam_m)H^M(x+\lam_n) \\
&\qquad\qquad\qquad\qquad\qquad\qquad -(H^M)'(x+\lam_m)(H^M)'(x+\lam_n)\ dx\\
&=\sum _{m,n=-\infty}^{\infty}a_m\overline{a_n}\int _{-\infty}^\infty
H^M(x+\lam_m)M(M-1)H^{M-2}(x+\lam_n)\ dx\\
&=M(M-1)\sum _{m,n=-\infty}^{\infty}g(\lam_m-\lam_n)a_m\overline{a_n}.\qedhere
\end{align*}
\end{proof}

In view of this lemma it suffices to show that 

\begin{equation}\label{41}
\sum _{m,n=-\infty}^{\infty}g(\lam_m-\lam_n)a_m\overline{a_n}\ge 0
\end{equation} 
for all finite sequences $(a_n)$ of complex numbers.

\begin{remark}
It follows easily from the definition that $g_M$ is a nonnegative, even function, vanishing outside the interval $(-\pi,\pi)$. It can  be computed explicitly for any given $M$. For example, if $0\le x\le \pi$, then we have
\begin{align*}
4g_2(x)&=2(\pi -x)+ \sin 2x,\\
32g_3(x)&=12(\pi-x)\cos x+9\sin x+\sin 3x,\\
192g_4(x)&=36(\pi -x) + 24 (\pi - x) \cos 2x + 28 \sin 2x + \sin 4x.
\end{align*}
Indeed, for $M=2$ we have
\begin{equation*}
4g_2(x)=\int_{-\pi/2}^{\pi/2-x}4\cos^2t\ dt=\int_{-\pi/2}^{\pi/2-x}2+2\cos 2t\ dt=2(\pi-x)+\sin 2x.
\end{equation*}
For $M=3,4$ the computation is similar but longer.
\end{remark}

\section{Proof of the conjecture for $M=2$}\label{s5}

The proof of \eqref{41} for $M=2$ is based on the following identity:

\begin{lemma}\label{l42}
The following identity holds:
\begin{align*}
\sum _{m,n=-\infty}^{\infty}g(\lam_m-\lam_n)a_m\overline{a_n}
&=\sum _{n=-\infty}^{\infty}g(\lam_{n+1}-\lam_n)\abs{a_n+a_{n+1}}^2\\
&\qquad +\sum _{n=-\infty}^{\infty}\bigl(g(0)-g(\lam_n-\lam_{n-1})-g(\lam_{n+1}-\lam_n)\bigr)
\abs{a_n}^2.
\end{align*}
\end{lemma}

\begin{proof}
Writing $h_{m,n}:=g(\lam_m-\lam_n)$ for brevity, and using the evenness of $g$,  the following computation leads to the 
required identity:
\begin{align*}
\sum_{m,n=-\infty}^{\infty}&h_{m,n}a_m\overline{a_n}\\
&=\sum_{n=-\infty}^{\infty}h_{n,n}\abs{a_n}^2+h_{n,n+1}(a_n\overline{a_{n+1}}+\overline{a_n}a_{n+1})\\
&=\sum_{n=-\infty}^{\infty}h_{n,n}\abs{a_n}^2
+h_{n,n+1}\bigl(\abs{a_n+a_{n+1}}^2-\abs{a_n}^2-\abs{a_{n+1}}^2\bigr)\\
&=\sum_{n=-\infty}^{\infty}h_{n,n+1}\abs{a_n+a_{n+1}}^2
+(h_{n,n}-h_{n,n+1}-h_{n-1,n})\abs{a_n}^2.\qedhere
\end{align*}
\end{proof}

Since $g$ is nonnegative, the first sum on the right side of the above identity is $\ge 0$. Since 
\begin{equation*}
\lam_{n+1}-\lam_n\ge 0,\quad \lam_n-\lam_{n-1}\ge 0\quad\text{and}\quad (\lam_{n+1}-\lam_n)+(\lam_n-\lam_{n-1})=\lam_{n+1}-\lam_{n-1}\ge \pi
\end{equation*}
by the gap condition \eqref{22}, the nonnegativity of the second sum follows from the next lemma which completes the proof of \eqref{41}.

\begin{lemma}\label{l43}
If 
\begin{equation*}
a\ge 0,\quad b\ge 0\quad \text{and}\quad a+b\ge\pi,
\end{equation*}
then
\begin{equation*}
g(a)+g(b)\le g(0).
\end{equation*}
\end{lemma}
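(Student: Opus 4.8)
The plan is to invoke the explicit formula $4g(x)=2(\pi-x)+\sin 2x$ for $0\le x\le\pi$ recorded in the Remark of Section~\ref{s4} (here $g=g_2$), together with the facts, also noted there, that $g$ is even, nonnegative, and vanishes outside $(-\pi,\pi)$. First I would dispose of the degenerate case. Differentiating the formula gives $g'(x)=\tfrac12(\cos 2x-1)\le 0$ on $[0,\pi]$, so $g$ is non-increasing there and, being zero beyond $\pi$, attains its global maximum at $0$; in particular $g(b)\le g(0)$ for every $b\ge 0$. Hence if either $a\ge\pi$ or $b\ge\pi$ the corresponding term vanishes and the desired inequality $g(a)+g(b)\le g(0)$ is immediate. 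It remains to treat the main case $a,b\in[0,\pi)$ with $a+b\ge\pi$.

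In this case I substitute the explicit formula and use $g(0)=\pi/2$. Writing $s:=a+b\in[\pi,2\pi)$, a short rearrangement shows that $g(a)+g(b)\le g(0)$ is equivalent to
\[
\sin 2a+\sin 2b\le 2(a+b-\pi).
\]
I would then apply the sum-to-product identity $\sin 2a+\sin 2b=2\sin(a+b)\cos(a-b)=2\sin s\,\cos(a-b)$, which reduces the goal to the single inequality
\[
\sin s\,\cos(a-b)\le s-\pi .
\]

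The crux is this last estimate. The key observation is that $s\in[\pi,2\pi)$ forces $\sin s\le 0$, so that regardless of the sign of $\cos(a-b)$ one has the crude bound
\[
\sin s\,\cos(a-b)\le\abs{\sin s}=-\sin s .
\]
It therefore suffices to prove the one-variable inequality $-\sin s\le s-\pi$ on $[\pi,2\pi)$, which I would establish by examining $\phi(s):=\sin s+s-\pi$: since $\phi(\pi)=0$ and $\phi'(s)=1+\cos s\ge 0$, the function $\phi$ is non-decreasing and hence nonnegative on $[\pi,2\pi)$, which is exactly $-\sin s\le s-\pi$. Chaining the two displayed estimates gives $\sin s\,\cos(a-b)\le s-\pi$, completing the argument.

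The main obstacle I anticipate is not any single delicate estimate but rather arranging the reduction so that the two angles $a,b$ collapse into the single quantity $s=a+b$. Once the sum-to-product step is taken and the sign $\sin s\le 0$ is exploited, the dependence on the difference $a-b$ is eliminated through $\cos(a-b)\le 1$, and only the elementary monotonicity of $\phi$ remains; the only point demanding care is confirming that the stated hypotheses indeed pin $s$ down to the interval $[\pi,2\pi)$ on which $\sin s\le 0$.
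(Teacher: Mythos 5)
Your proof is correct, but it takes a genuinely different route from the paper's. You work from the explicit formula $4g(x)=2(\pi-x)+\sin 2x$ of the Remark in Section~\ref{s4}: after disposing of the case $\max(a,b)\ge\pi$ via $g'(x)=\tfrac12(\cos 2x-1)\le 0$, you reduce the claim to $\sin 2a+\sin 2b\le 2(a+b-\pi)$ and settle it by sum-to-product, the sign of $\sin s$ on $s=a+b\in[\pi,2\pi)$, the crude bound $\abs{\cos(a-b)}\le 1$, and the monotonicity of $\phi(s)=\sin s+s-\pi$; each step checks out, including the verification that the hypotheses pin $s$ to $[\pi,2\pi)$ in the main case. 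The paper instead argues directly from the integral definition $g(a)=\int_{-\infty}^\infty H^2(x)H^0(x+a)\,dx$: the hypotheses make the intervals $\bigl(-\tfrac{\pi}{2},\tfrac{\pi}{2}-a\bigr)$ and $\bigl(-\tfrac{\pi}{2}+b,\tfrac{\pi}{2}\bigr)$ disjoint, so $g(a)+g(b)$ is an integral of the nonnegative function $H^2$ over disjoint subsets of the line and hence at most $\int H^2=g(0)$ --- no case distinction, no explicit formula, no calculus. The paper's disjoint-support argument is shorter, exposes the geometric reason the inequality holds (the two translated windows cannot overlap once $a+b\ge\pi$), and makes the equality case $a+b=\pi$, noted in the paper's subsequent remark, completely transparent. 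Your computational route buys self-containedness modulo the Remark's formula and shows exactly where strictness enters (through $\phi'(s)=1+\cos s$ and $\cos(a-b)\le 1$), but it is wedded to the explicit $M=2$ antiderivative, whereas the paper's argument uses only nonnegativity and supports and so is the more robust template.
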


\begin{proof}
Since the functions $H^2$, $H^0$ are nonnegative and since the intervals
\begin{equation*}
\Bigl(\frac{-\pi}{2},\frac{\pi}{2}-a\Bigr)\quad \text{and}\quad 
\Bigl(\frac{-\pi}{2}+b,\frac{\pi}{2}\Bigr)
\end{equation*}
are disjoint, we have
\begin{align*}
g(a)+g(b)&=\int _{-\infty}^\infty H^2(x) H^0(x+a)\ dx
+\int _{-\infty}^\infty H^2(x) H^0(x-b)\ dx\\
&=\int _{\frac{-\pi}{2}}^{\frac{\pi}{2}-a}H^2(x)\ dx
+\int _{\frac{-\pi}{2}+b}^{\frac{\pi}{2}}H^2(x)\ dx\\
&\le \int _{-\infty}^\infty H^2(x) \ dx\\
&=g(0).\qedhere
\end{align*}
\end{proof}

\begin{remark}
The above proof also shows that for $a+b=\pi$ we have $g(a)+g(b)=g(0)$, i.e.,
\begin{equation*}
g(x)+g(\pi -x)= g(0)
\end{equation*}
for all $x\in [0,\pi]$. This can also be seen from the explicit formula of $g$.
\end{remark}

\end{document}